\newtheorem{theorem}{Theorem}
\newtheorem{lemma}[theorem]{Lemma}
\newtheorem{problem}[theorem]{Problem}
\newtheorem{conjecture}[theorem]{Conjecture}
\numberwithin{equation}{section}
\let\oldbibliography\thebibliography
\renewcommand{\thebibliography}[1]{%
  \oldbibliography{#1}%
  \setlength{\itemsep}{-2pt}%
  \setlength{\baselineskip}{15pt}
  \setlength{\lineskiplimit}{-\maxdimen}
}
\numberwithin{equation}{section}
\tikzstyle{none}=[inner sep=0mm]
\tikzstyle{bluenode}=[fill=blue, draw=black, shape=circle, minimum size=0.18cm,
\tikzstyle{blacknode}=[fill=black, draw=black, shape=circle, minimum
\tikzstyle{rednode}=[fill={rgb,255: red,244; green,0; blue,0}, draw=black,
\tikzstyle{square}=[draw=black, shape=rectangle, minimum
\tikzstyle{whitenode}=[fill={rgb,255: red,245; green,245; blue,245},
\tikzstyle{whitenode_v2}=[fill={rgb,255: red,245; green,245; blue,245},
\tikzstyle{greennode_v2}=[fill={rgb,255: red,150; green,245; blue,150},
\tikzstyle{blacknode_v1}=[fill=black, draw=black, shape=circle, minimum
\tikzstyle{square}=[draw=black, shape=rectangle, minimum
\tikzstyle{blue_thick}=[-, line width=0.4mm, draw=blue]
\tikzstyle{blue_bold}=[-, draw=blue, line width=0.4mm]
\tikzstyle{black_bold}=[-, draw=black, line width=0.4mm]
\tikzstyle{blueedge}=[-, draw={rgb,255: red,0; green,0; blue,224}, line
\tikzstyle{blackedge}=[-, draw=black, fill=none, line width=0.3mm]
\tikzstyle{blackedge_thick}=[-, draw=black, line width=0.4mm, fill=none]
\tikzstyle{balck_dash}=[-, dash pattern=on 0.2mm off 0.2mm]
\tikzstyle{rededge}=[-, line width=0.2mm, draw=red]
\tikzstyle{greenedge}=[-, draw={rgb,255: red,9; green,122; blue,43}]
\tikzstyle{rededge_thick}=[-, line width=0.45mm, draw=red]
\tikzstyle{blackedge_opacity}=[-, -, draw={rgb,255: red,91; green,87; blue,84},
\tikzstyle{blue_thick}=[-, line width=0.5mm, draw=blue]
\begin{document}
 \title{A note on the sizes of bipartite $1$-planar graphs }

\author{ Guiping Wang\thanks{ Email: wgp@hunnu.edu.cn}\\
{\small College of Mathematics and Statistics} \\
{\small Hunan Normal University, Changsha 410081, P.R.China}}
\date{}
\maketitle

\begin{abstract}
A graph is $1$-planar if it admits a drawing in the plane such that each edge is crossed at most once. 
Let $G$ be a bipartite $1$-planar graph with partite sets $X$ and $Y$.
A $1$-disk $\mathcal{O}_{X}$  drawing of  $G$  is a $1$-planar drawing such that all vertices of $X$ lie on the boundary of $\mathcal{O}$ 
and all vertices of $Y$ and all edges of $G$ locate in the interior of $\mathcal{O}$, 
where $\mathcal{O}$ is a disk on the plane. 
The concept  was first proposed by Huang, Ouyang and Dong 
when they solved a conjecture about the edge density of bipartite $1$-planar graphs. 
Additionally, they presented a problem of determining the maximum number of edges in a bipartite graph with a $1$-disk $\mathcal{O}_{X}$ drawing. 
In this paper, we solve this problem and prove that every bipartite graph $G$ which has a  $1$-disk $\mathcal{O}_{X}$  drawing has at most $2|V(G)|+|X|-6$ edges.
Moreover, we demonstrate that this upper bound is tight.

\bigskip\noindent \textbf{Keywords} edge density $\cdot$ bipartite graph $\cdot$ $1$-planar graph $\cdot$ $1$-disk drawing

\bigskip\noindent \textbf{Mathematics Subject Classification} 05C10 $\cdot$ 05C35 $\cdot$ 05C62

\end{abstract}

\section{Introduction}
All graphs considered in this paper are simple. 
For a graph $G$, let $V(G)$, $E(G)$, $|V(G)|$ and $|E(G)|$ denote the vertex set, the edge set, the order and the size of $G$, respectively.
The terms not defined here can be found in  \cite{JB2008}.

A {\it drawing} of a graph $G=(V,E)$ is a mapping $D$ that assigns to each vertex in $V$ a distinct point in the plane 
and to each edge $uv$ in $E$ a continuous arc connecting $D(u)$ and $D(v)$.
In recent decades, the problem of determining the maximum number of edges in graphs with restricted drawings has received considerable attention.
Especially in the context of non-planar graphs, for example, see \cite{WG2019} for a survey.



A graph is {\it bipartite} if its vertex set can be partitioned into two subsets $X$ and $Y$ such that every edge joins a vertex in $X$ and a vertex in $Y$.
A graph is  {\it planar} if it has a drawing such that no edges cross each other. 
A graph is  {\it $1$-planar} if it has a drawing such that each edge is crossed at most once.

It is well known that every planar graph with $n(n\geq3)$ vertices has at most $3n-6$ edges. 
A bipartite planar graph with $n (n\geq3)$ vertices has at most $2n-4$ edges.
Any 1-planar graph with $n(n\geq3)$ vertices has at most $4n-8$ edges \cite{HB1984,IF2007,JP1997}.
In particular, 
Karpov \cite{DK2014} proved that every bipartite $1$-planar graph has at most $3n-8$ edges for even $n\neq6$ and at most $3n-9$ for odd $n$ and for $n=6$.
Note that Karpov’s upper bound on the size of a bipartite 1-planar graph is determined by its order.
When the sizes of the two partite sets of a bipartite 1-planar graph are given, Czap, Przyby{\l}o and \v{S}krabul’áková \cite{JJ2016} obtained that 
if $G$ is a bipartite 1-planar graph with partite sets of sizes $|X|$ and $|Y|$ satisfying $2\leq |X| \leq |Y|$, 
then $|E(G)|\leq2|V(G)|+6|X|-16$ and the bound is tight for $|X|=2$.
For each pair of integers $|X|$ and $|Y|$ with $|X|\geq 3$ and $|Y|\geq 6|X|-12$, the authors in \cite{JJ2016}
constructed a bipartite 1-planar graph $G$ with partite sets of sizes $|X|$ and $|Y|$ and $|E(G)|\geq 2|V(G)|+4|X|-12$. 
Moreover, they believed this lower bound is optimal for such graphs and thus put forward the following conjecture.

\begin{conjecture}[\cite{JJ2016}]\label{conjecture}
For any integers $|X|$ and $|Y|$ with $|X|\geq3$ and $|Y|\geq 6|X|-12$. 
If $G$ is a bipartite 1-planar graph with partite sets of sizes $|X|$ and $|Y|$. 
Then $|E(G)|\leq2|V(G)|+4|X|-12$.
\end{conjecture}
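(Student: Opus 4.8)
The plan is to bound $|E(G)|$ by controlling the number of crossings of an optimal $1$-planar drawing and then invoking the bipartite planar edge bound. Fix a $1$-planar drawing $D$ of $G$ with the maximum number of edges, and assume, as one may in an optimal drawing, that no two adjacent edges cross and that each pair of edges crosses at most once; in particular, since every edge is crossed at most once, the crossing pairs are edge-disjoint. Deleting one edge from each of the $c$ crossing pairs then removes exactly $c$ edges and leaves a bipartite \emph{planar} graph on $n=|V(G)|$ vertices, which has at most $2n-4$ edges. Hence $|E(G)|\le 2n-4+c$, and since $n=|X|+|Y|$ it suffices to prove the crossing estimate $c\le 4|X|-8$ in order to obtain $|E(G)|\le 2|V(G)|+4|X|-12$. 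This target sharpens the Czap et al. bound by $2|X|-4$, and the whole difficulty is to extract this saving, for which the hypothesis $|Y|\ge 6|X|-12$ is essential.

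The first reduction is an induction on $|Y|$ that strips away the sparse part of the drawing. If some vertex $y\in Y$ has degree at most $2$ and $|Y|-1\ge 6|X|-12$, delete it: the order drops by one, at most two edges are lost, and the inductive hypothesis gives $|E(G)|\le |E(G-y)|+2\le \big(2|V(G-y)|+4|X|-12\big)+2=2|V(G)|+4|X|-12$. Iterating, we reduce to the irreducible situations in which either every vertex of $Y$ has degree at least $3$, or $|Y|=6|X|-12$; in both the inductive step is unavailable. The condition that every $Y$-vertex has degree at least $3$ forces $|E(G)|\ge 3|Y|$, which together with the Czap et al. bound confines $|Y|$ to the window $6|X|-12\le |Y|\le 8|X|-16$; this already shows that the induction alone cannot finish and that a genuinely structural argument is needed for the remaining dense regime.

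The heart of the proof, and the step I expect to be the main obstacle, is to establish $c\le 4|X|-8$ for these irreducible drawings by reducing them to $1$-disk configurations. The idea is to select a disk $\mathcal{O}$ whose boundary passes through the vertices of $X$ so that, after cutting the plane open along this boundary, the drawing becomes a $1$-disk $\mathcal{O}_{X}$ drawing, or a controlled union of such drawings glued along the $X$-vertices, with all of $Y$ and all edges in the interior. One then applies the edge bound for $1$-disk $\mathcal{O}_{X}$ drawings to each piece and reassembles the estimate. The delicate points are to show that the uncrossed edges incident to $X$ really do organise the drawing into such disk regions, and to account for the crossings meeting the cut so that none is lost or double counted. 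This is exactly why a sharp bound for $1$-disk $\mathcal{O}_{X}$ drawings is required, and it is this reduction that motivates the main theorem of the present paper, the bound $|E(G)|\le 2|V(G)|+|X|-6$ for $1$-disk $\mathcal{O}_{X}$ drawings; applied to the pieces, the latter delivers $c\le 4|X|-8$ and hence the conjectured inequality. Tightness then requires no further work, since the construction of Czap et al. already realises $2|V(G)|+4|X|-12$ edges for every admissible pair $|X|,|Y|$, so the inequality is best possible.
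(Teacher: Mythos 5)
You should first note that the paper does not actually prove this statement: it is quoted as Conjecture \ref{conjecture} from \cite{JJ2016} and is settled only by citing Theorem \ref{tightbi} of \cite{YH2021}, so your argument has to stand entirely on its own. Your opening reduction is sound: in a crossing-minimal $1$-planar drawing the crossed edges pair up into $c$ edge-disjoint pairs, deleting one edge per pair leaves a bipartite plane graph on $|V(G)|\ge 3$ vertices, hence $|E(G)|\le 2|V(G)|-4+c$, and the conjecture would indeed follow from $c\le 4|X|-8$. The degree reduction on $Y$ is also fine as far as it goes (though a drawing of $G$ has a fixed number of edges, so ``drawing with the maximum number of edges'' should read ``crossing-minimal drawing''). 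The difficulty is that after these reductions the entire content of the conjecture is concentrated in the single claim $c\le 4|X|-8$, and you do not prove it: you yourself label it ``the heart of the proof'' and ``the main obstacle,'' and then only list the sub-steps that would be required --- the existence of a closed curve through the vertices of $X$ cutting the drawing into $1$-disk $\mathcal{O}_{X}$ pieces, and the bookkeeping of crossings met by the cut --- without carrying out either. No reason is given why such a curve exists for an arbitrary crossing-minimal drawing, and this is precisely where the real work lies.

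Even granting the decomposition, there is a second missing link: Theorem \ref{tight upper bound} bounds the number of \emph{edges} of a graph admitting a $1$-disk $\mathcal{O}_{X}$ drawing, and you never explain how summing edge bounds over the pieces converts into a bound on the number of \emph{crossings} $c$ of the original drawing; crossings and edges lying outside every piece, or shared between pieces glued along $X$, are unaccounted for, and no computation is offered showing the totals come to $4|X|-8$ rather than a larger multiple of $|X|$. (For comparison, the published resolution in \cite{YH2021} invokes the $1$-disk machinery essentially through the $|X|=3$ case, Lemma \ref{xis3}, applied inside regions of an auxiliary planar structure, together with a substantial case analysis; nothing in your sketch reproduces that work.) Finally, the ``irreducible'' case $|Y|=6|X|-12$ with a $Y$-vertex of degree at most $2$ is flagged but never returned to. As written this is a reasonable plan of attack with correct peripheral reductions, but the central step is asserted rather than proved.
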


In 2021, Huang, Ouyang and Dong \cite{YH2021} proved the following theorem 
which confirms  Conjecture \ref{conjecture}  under a weaker condition.

\begin{theorem}[\cite{YH2021}]\label{tightbi}
Let $G$ be a bipartite 1-planar graph with partite sets of sizes $|X|$ and $|Y|$, where $2\leq |X| \leq |Y|$. 
Then $|E(G)|\leq2|V(G)|+4|X|-12$. Moreover, this bound is tight.
\end{theorem}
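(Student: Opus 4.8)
The plan is to fix a $1$-planar drawing $D$ of $G$ with the minimum possible number of crossings and to analyse its planarization. Write $n=|V(G)|$, let $k$ be the number of crossings of $D$, and split $E(G)$ into the set $E_p$ of edges that are not crossed and the set $E_c$ of edges that are crossed, so that $|E_c|=2k$ and $|E(G)|=|E_p|+2k$. Let $G^{\times}$ be the plane graph obtained by placing a degree-$4$ dummy vertex at each crossing; it has $n+k$ vertices and $|E_p|+4k$ edges. The whole argument then reduces to establishing the two inequalities
\[
|E_p|+k\le 2n-4\qquad\text{and}\qquad k\le 4|X|-8,
\]
because adding them gives $|E(G)|=|E_p|+2k\le 2n+4|X|-12$ at once. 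We may assume $G$ is connected, which is the case maximising the edge count.

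For the first inequality I would run a face-length count on $G^{\times}$ that exploits bipartiteness. No two dummy vertices are adjacent (each crossed edge carries a single crossing), and $G$ has no triangles, so every triangular face of $G^{\times}$ contains exactly one dummy vertex and two genuine vertices. Around a dummy vertex $c$ coming from a crossing of $x_1y_1$ and $x_2y_2$ (with $x_i\in X$, $y_i\in Y$), the corners $x_1,x_2,y_1,y_2$ occur in one of two rotation patterns, and in both of them exactly two of the four wedges at $c$ are spanned by two vertices of the same part; the faces in those two wedges cannot be triangles, since that would force an edge inside a single part. Hence each dummy vertex borders at most two triangular faces, so the number $f_3$ of triangular faces satisfies $f_3\le 2k$. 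Combining this with Euler's formula and the handshake estimate $2|E(G^{\times})|\ge 4f-f_3$ yields $|E_p|+k\le 2n-4$ after routine simplification.

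For the second inequality, which I expect to be the main obstacle, I would project the crossings onto the small side $X$. Define an auxiliary graph $H$ with vertex set $X$ in which every crossing of $D$, say of $x_1y_1$ and $x_2y_2$, contributes an edge $x_1x_2$; then $|E(H)|=k$. The two $X$-corners of a crossing are consecutive in the rotation at the dummy vertex, so the broken line $x_1\to c\to x_2$ running alongside the two crossed edges can be replaced by a single arc lying in the $X$-$X$ wedge at $c$. Drawing all these arcs simultaneously produces a drawing $D_H$ of $H$ in which, because each edge of $D$ is crossed only once and the arcs merely hug the crossed edges, every arc should be crossed at most once; thus $H$ is $1$-planar, and the standard bound $|E(H)|\le 4|X|-8$ (valid for $|X|\ge 3$; every bipartite graph with $|X|=2$ is planar, so there $k=0$) gives $k\le 4|X|-8$. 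The delicate points, and where the real work lies, are (a) arguing that $H$ may be taken to be \emph{simple} — that in a minimum-crossing drawing no two crossings share the same pair of $X$-endpoints, or else controlling the resulting parallel arcs — since the $1$-planar edge bound fails for multigraphs, and (b) verifying that the hugging arcs can be routed so that no arc is crossed more than once. Both are plausibly settled by local rerouting arguments that invoke minimality of the crossing number, but they constitute the technical heart of the proof.

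Finally, for tightness it suffices to exhibit graphs meeting the bound. For $|X|=2$ the planar graph $K_{2,|Y|}$ has exactly $2|Y|=2|V(G)|+4|X|-12$ edges. For $|X|\ge 3$ and $|Y|\ge 6|X|-12$ the construction of \cite{JJ2016} recalled in the introduction already attains $2|V(G)|+4|X|-12$ edges; moreover one can keep equality while enlarging $Y$, since inserting a new $Y$-vertex of degree $2$ into a quadrilateral face $xyx'y'$ (joining it to $x,x'\in X$) adds one vertex and two edges and raises the right-hand side by exactly $2$, preserving both tightness and $1$-planarity.
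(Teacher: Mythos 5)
There is a genuine gap, and it is fatal: your second key inequality $k\le 4|X|-8$ is false. Take $G=K_{3,6}$, which is bipartite, $1$-planar (it is exactly the Czap--Przyby{\l}o--\v{S}krabu\v{l}'\'akov\'a extremal graph for $|X|=3$, $|Y|=6$, having $18=2|V(G)|+4|X|-12$ edges), and has crossing number $\mathrm{cr}(K_{3,6})=\lfloor 3/2\rfloor\lfloor 2/2\rfloor\lfloor 6/2\rfloor\lfloor 5/2\rfloor=6$ by the proven case of Zarankiewicz's conjecture. Hence \emph{every} drawing of $K_{3,6}$, in particular every $1$-planar one, has at least $6$ crossings, while your claimed bound would force $k\le 4|X|-8=4$. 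The failure is visible precisely at the point you flag as delicate: your auxiliary graph $H$ on $X$ would have at least $6$ edges on $3$ vertices, so it is unavoidably a multigraph with parallel edges of high multiplicity, and no local rerouting or minimality argument can restore simplicity; since the $4n-8$ bound is meaningless for $1$-planar multigraphs, the whole second half collapses. (Your first inequality $|E_p|+k\le 2n-4$ is correct and standard --- note that for $K_{3,6}$ it is slack, $12\le 14$, which is exactly why the true bound can still hold even though your proposed split of it into two summands cannot.) The tightness discussion is fine.

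For comparison: the present paper does not prove this theorem at all, it imports it from Huang, Ouyang and Dong \cite{YH2021}, and their actual argument is of a completely different nature. Rather than bounding the total number of crossings globally by a function of $|X|$, they introduce the $1$-disk $\mathcal{O}_X$ drawing machinery (Lemma \ref{xis3} of this paper is one of their ingredients), carve the drawing into regions attached to the $X$-side, and run a lengthy structural and counting analysis there. So a repair of your approach would have to replace $k\le 4|X|-8$ by a correct way of charging crossings to $X$ that tolerates the slack in $|E_p|+k\le 2n-4$; the naive auxiliary-graph bound does not do this.
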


Especially, in \cite{YH2021}, the authors proposed an important definition when they solved the conjecture.
A {\it $1$-disk $\mathcal{O}_{X}$ drawing} of a bipartite graph $G$ with partite sets $X$ and $Y$ is a $1$-planar drawing  
such that all vertices of $X$ lie on the boundary of $\mathcal{O}$ and all vertices of $Y$ and all edges of $G$ lie in the interior of $\mathcal{O}$, 
where $\mathcal{O}$ a disk on the plane up to a homeomorphism of the plane. 
For the upper bound on the size of a bipartite graph which has a 1-disk $\mathcal{O}_{X}$ drawing and  $|X|=3$, 
the authors in \cite{YH2021} presented the following lemma.

\begin{lemma}[\cite{YH2021}]\label{xis3}
Let $G$ be a bipartite graph with partite sets $X$ and $Y$ which has a 1-disk $\mathcal{O}_{X}$ drawing.
If $|X|=3$, then $|E(G)|\leq 2|Y|+3$.
\end{lemma}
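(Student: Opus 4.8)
The plan is to pass to the planarization of the given $1$-disk $\mathcal{O}_X$ drawing and read the bound off from Euler's formula, with the three boundary vertices supplying the additive constant. Write $X=\{x_1,x_2,x_3\}$ in their cyclic order on $\partial\mathcal{O}$. First I would normalize the drawing: since $G$ is simple and bipartite I may assume no two edges cross more than once, no edge crosses an edge adjacent to it, and (adding edges only helps, as we seek an upper bound) that $G$ is edge-maximal among bipartite graphs admitting a $1$-disk $\mathcal{O}_X$ drawing extending the given one. I would also add the three boundary arcs $x_1x_2$, $x_2x_3$, $x_3x_1$ as uncrossed edges, so that the closed disk acquires a triangulated outer boundary; this is harmless for the count and makes the outer face the triangle $x_1x_2x_3$.

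Next, replace each of the $c$ crossings by a degree-$4$ dummy vertex to obtain a plane graph $G^{\times}$ with $|V(G^\times)| = 3+|Y|+c$ and $|E(G^\times)| = |E(G)|+2c+3$. Assuming $G^\times$ connected, Euler's formula gives $|F(G^\times)| = |E(G)|+c-|Y|+2$, and summing $\operatorname{len}(f)-4$ over all faces yields the identity
\[
\sum_{f\in F(G^\times)}\bigl(\operatorname{len}(f)-4\bigr)\;=\;2|E(G^\times)|-4|F(G^\times)|\;=\;4|Y|-2|E(G)|-2 .
\]
Hence the desired inequality $|E(G)|\le 2|Y|+3$ is \emph{exactly equivalent} to the face-charge bound $\sum_{f}\bigl(\operatorname{len}(f)-4\bigr)\ge -8$.

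Then I would analyze the faces. Because $G$ is bipartite and each edge is split by at most one dummy vertex, the edges of $G^\times$ join only $X$–$Y$, $X$–$C$, and $C$–$Y$ pairs together with the three boundary $X$–$X$ edges; consequently every face has length at least $3$, and only triangular faces carry negative charge. The triangles are of three kinds: the outer triangle; at most one interior triangle per boundary edge; and ``crossing triangles'' of the form $x_i$–(dummy)–$y$ created at a crossing. The crux is a discharging step showing that, apart from a bounded number of exceptions attached to the three boundary arcs, every crossing triangle can be charged to an incident face of length $\ge 4$ — in particular to the ``monochromatic'' $Y$–$Y$ sector at its crossing, which by bipartiteness always has length at least $4$; the $X$–$X$ sectors behave the same way except when they abut a boundary arc, and it is precisely these abutments that produce the exceptions. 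Carrying this out so that the total unpaid deficit is at most $8$ is the main obstacle, and this is where the hypothesis $|X|=3$ (rather than the disk constraint in the abstract) enters, pinning down the constant and matching the tightness of the bound.

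Finally I would dispose of the boundary cases: justifying the connectivity assumption by treating components separately or adding connecting edges, checking that the normalization did not decrease $|E(G)|$, and confirming that the few uncompensated triangular faces are accounted for exactly by the outer face and the three boundary arcs. The degree and Euler bookkeeping is routine; essentially all the difficulty is concentrated in the discharging over crossing triangles, and I expect that step to require the most care.
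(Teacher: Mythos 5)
First, a remark on the comparison itself: the paper does not prove Lemma~\ref{xis3} --- it is quoted from \cite{YH2021} without proof --- so your attempt can only be judged on its own merits, not against an argument in this manuscript.

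Your reduction is sound as far as it goes: the normalizations (edge-maximality, adding the three boundary arcs, connectivity) are harmless, the planarization bookkeeping is correct, and the equivalence of $|E(G)|\le 2|Y|+3$ with $\sum_{f}(\operatorname{len}(f)-4)\ge -8$ checks out. But the proof stops exactly where the content begins. The entire difficulty of the lemma is the inequality $\sum_{f}(\operatorname{len}(f)-4)\ge -8$, and you do not establish it; you describe it as ``the main obstacle'' and defer it. Worse, the mechanism you sketch cannot work as stated: you propose to charge each crossing triangle (charge $-1$) to the monochromatic $Y$--$Y$ sector at its crossing, which ``by bipartiteness always has length at least $4$.'' A face of length exactly $4$ carries charge $0$ and has nothing to give, so this discharging does not reduce the deficit at all. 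Concretely, a crossing of $xy$ and $x'y'$ in the interior of the disk whose $xy'$- and $x'y$-sectors are both triangles contributes $-2$ locally while its $xx'$- and $yy'$-sectors may both be quadrilaterals contributing $0$; nothing in your argument bounds the number of such crossings or locates the compensating faces of length $\ge 5$. Since the extremal drawing ($K_{3,3}$ in the configuration $B_3$) achieves $\sum_f(\operatorname{len}(f)-4)=-8$ exactly, there is no slack, and the hypothesis $|X|=3$ must enter through a genuine structural analysis rather than the vague ``pinning down the constant'' you gesture at. As written, the proposal is a correct reformulation of the lemma, not a proof of it.
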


For bipartite graphs without the condition $|X|=3$, they posed the following general problem.

\begin{problem}[\cite{YH2021}]\label{oxdrawing}
Let $G$ be a bipartite graph with partite sets $X$ and $Y$ which has a 1-disk $\mathcal{O}_{X}$ drawing.
Is it true that $|E(G)|\leq 2|Y|+5|X|/3-2$?
\end{problem}

In this paper, we consider this problem and improve the result of Problem \ref{oxdrawing} as follows.

\begin{theorem}\label{tight upper bound}
Let $G$ be a bipartite graph that has partite sets $X$ and $Y$ with $2\leq |X| \leq |Y|$. 
If $G$ has a 1-disk $\mathcal{O}_{X}$ drawing.  
Then $|E(G)|\leq 2|V(G)|+|X|-6$.
\end{theorem}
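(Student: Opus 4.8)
The plan is to write $s=|X|$ and $t=|Y|$, so that the target inequality reads $|E(G)|\le 3s+2t-6$, and to split the edges of $G$ according to whether they are crossed in the fixed $1$-disk $\mathcal{O}_X$ drawing. Let $c$ be the number of crossings; since each edge is crossed at most once, the $2c$ crossed edges are distinct and split into $c$ disjoint crossing pairs. Deleting one edge from each crossing pair produces a bipartite \emph{plane} subgraph $G'$ with $|E(G')|=|E(G)|-c$, still drawn inside $\mathcal{O}$ with all of $X$ on the boundary circle. The whole argument then rests on two quantitative estimates: a ``disk-refined'' planar bound $|E(G')|\le s+2t-2$, and a bound on the number of crossings $c\le 2s-4$. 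Together these give $|E(G)|=|E(G')|+c\le 3s+2t-6$. Before starting I would reduce to the generic case: assume $G$ is connected and edge-maximal among bipartite graphs admitting a $1$-disk $\mathcal{O}_X$ drawing, and dispose of $|X|\le 3$ using the trivial bound for $|X|=2$ (every $y\in Y$ has degree at most $2$) and Lemma~\ref{xis3} for $|X|=3$; so I may assume $s\ge 4$.

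For the skeleton bound I would apply Euler's formula to $G'$. The essential geometric input is that $X$ is an independent set lying on the boundary of $\mathcal{O}$: hence on the outer facial walk no two $X$-vertices are consecutive, so each of the $s$ vertices of $X$ is separated from the next by at least one $Y$-vertex and the outer face has length at least $2s$. Since $G'$ is bipartite and simple, every other face has length at least $4$. Summing face lengths, $2|E(G')|=\sum_f \ell(f)\ge 2s+4(|F|-1)$, and substituting $|F|=2-(s+t)+|E(G')|$ from Euler's formula yields $|E(G')|\le s+2t-2$, as required.

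The heart of the proof is the bound $c\le 2s-4$, and this is where I expect the real difficulty. For each crossing $p$, formed by edges $x_au$ and $x_bw$ with $x_a,x_b\in X$, splice the sub-arc of $x_au$ from $x_a$ to $p$ with the sub-arc of $x_bw$ from $p$ to $x_b$ into a single arc $\alpha_p$ joining the two boundary vertices $x_a$ and $x_b$. Because each edge is crossed exactly once, the interiors of these sub-arcs carry no crossing, and for distinct crossings the four defining edges are distinct; hence the arcs $\{\alpha_p\}$ are pairwise interior-disjoint curves with endpoints in $X$ on the boundary circle. Consequently they realize an outerplanar graph $A$ on the vertex set $X$ with exactly $c$ edges, which already gives the near-optimal $c\le 2s-3$.

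The final unit --- improving $2s-3$ to $2s-4$ --- is the main obstacle, and it cannot be settled by the two estimates in isolation: a maximal outerplanar $A$ (a triangulated polygon) and a skeleton $G'$ meeting $|E(G')|=s+2t-2$ (an outer $2s$-gon with a quadrangulated interior) are each individually attainable but, as the case $s=3$ of Lemma~\ref{xis3} already shows, are mutually exclusive. I would therefore close the gap by exploiting the interaction between the two structures: concretely, I expect to show that the bipartite/disk geometry forbids a ``triangle'' in $A$ (three crossings whose anchor arcs bound a region), so that $A$ is in fact triangle-free outerplanar and hence has at most $2s-4$ edges for $s\ge 4$; alternatively, one runs a single discharging argument on the planarization $G^\times$ in which the surplus charge of the length-$\ge 2s$ outer face is transported to the ``crossing triangles'' (the length-$3$ faces, each of which contains exactly one crossing vertex wedged between a pure-$X$ and a pure-$Y$ sector), the point being that these triangles are too numerous to be paid for locally and so must be controlled globally through the arc structure above. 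Verifying this incompatibility of the two extremal configurations is the step I expect to be the most delicate.
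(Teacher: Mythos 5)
Your route is genuinely different from the paper's, but it has a concrete gap at its load-bearing step, and in fact the key quantitative claim you rely on is false as stated. The paper proves the theorem by a short reduction: it reflects the $1$-disk drawing across the boundary circle to get a second copy of $G$ drawn outside $\mathcal{O}$, glues the two copies along the shared set $X$, and observes that the result $G^{*}$ is a simple bipartite $1$-planar graph with $|V(G^{*})|=|X|+2|Y|$ and $|E(G^{*})|=2|E(G)|$; applying the Huang--Ouyang--Dong bound $|E|\leq 2|V|+4|X|-12$ (Theorem \ref{tightbi}) to $G^{*}$ and dividing by $2$ gives exactly $|E(G)|\leq 3|X|+2|Y|-6$. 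Your plan instead tries to prove the bound from scratch by splitting $|E(G)|=|E(G')|+c$ and bounding the plane skeleton and the crossing number separately.

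The problem is that the two halves cannot both be proved independently. Your arc graph $A$ need not be simple: two distinct crossings can join the same pair $x_a,x_b\in X$. For instance, place $y,y'$ in a small pocket between two consecutive boundary vertices $x_a,x_b$, with all four edges $x_ay,x_ay',x_by,x_by'$ present and drawn so that $x_ay$ crosses $x_by'$ and $x_ay'$ crosses $x_by$; each edge is crossed exactly once, and both crossings yield parallel arcs from $x_a$ to $x_b$. Outerplanar \emph{multigraphs} have unboundedly many edges, so even your intermediate claim $c\leq 2s-3$ does not follow from outerplanarity. Worse, repeating this pocket in every gap between consecutive $X$-vertices produces a legitimate $1$-disk drawing with $c=2s$ crossings, so the standalone bound $c\leq 2s-4$ is simply false for arbitrary drawings (and you never invoke crossing-minimality, which would be needed even to hope to kill such configurations). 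So the additive strategy forces you to couple the skeleton bound with the crossing bound --- exactly the "incompatibility of the two extremal configurations" that you yourself flag as unverified. As written, the decisive step is both conjectural and, in the form you state it, refutable; the proposal does not constitute a proof. The skeleton estimate $|E(G')|\leq s+2t-2$ is essentially sound (modulo connectivity and isolated-vertex caveats), but it is the easy half.
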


The following theorem will show that the upper bound in Theorem \ref{tight upper bound} is best possible.

\begin{theorem}\label{extremalgraph}
Let $|X|$ and $|Y|$ be any two integers satisfying $2\leq |X|\leq |Y|$. 
Then there exists a bipartite graph $G$ with partite sets of sizes $|X|$ and $|Y|$ which  has a 1-disk $\mathcal{O}_{X}$ drawing 
and $|E(G)|= 2|V(G)|+|X|-6$.
\end{theorem}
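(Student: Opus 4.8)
The plan is to exhibit, for every admissible pair $(|X|,|Y|)$, an explicit bipartite graph together with a $1$-disk $\mathcal{O}_X$ drawing attaining $|E(G)|=2|V(G)|+|X|-6=3|X|+2|Y|-6$ edges; since Theorem \ref{tight upper bound} already supplies the matching upper bound, it suffices to reach this value. The first step is a reformulation of the disk condition: a $1$-disk $\mathcal{O}_X$ drawing is the same thing as a $1$-planar drawing of $G$ in the plane in which all vertices of $X$ lie on the boundary of a single (outer) face. This reformulation lets me separate the construction into a crossing-free skeleton plus a controlled set of crossing edges.

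Next I would record two bookkeeping facts that pin down the target numerology. First, a bipartite planar graph all of whose $X$-vertices lie on the outer face has at most $2|V|-|X|-2$ edges; this follows from Euler's formula once one observes that the outer face has degree at least $2|X|$ (its boundary walk alternates between $X$ and $Y$ and meets every vertex of $X$), while every inner face has degree at least $4$. Second, inside a hexagonal face $x\,y\,x'\,y'\,x''\,y''$ one may insert two ``long'' chords, say $x\,y'$ and $x'\,y''$, which cross exactly once, adding $2$ edges and $1$ crossing while keeping the graph bipartite and simple; a quadrilateral face admits no bipartite chord at all, so crossings are forced to live in faces of length at least $6$. Combining these with Euler's formula, a skeleton that is quadrangulated except for $h$ hexagonal cells, each then filled by one crossing pair, has exactly $|X|+2|Y|-2+h$ edges, so I would take $h=2|X|-4$.

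The core construction is then the skeleton itself: place $x_1,\dots,x_{|X|}$ on the boundary circle, realize the outer face as the even cycle $x_1 w_1 x_2 w_2\cdots x_{|X|} w_{|X|}$ with $w_i\in Y$, quadrangulate the interior, and arrange exactly $2|X|-4$ of the interior cells to be hexagons, each receiving a crossing pair as above. A direct check that the crossing pairs are pairwise edge-disjoint (so every edge is crossed at most once) and that all of $X$ stays on the outer face then gives a valid $1$-disk $\mathcal{O}_X$ drawing with the desired edge count. To obtain every value of $|Y|$ I would use a padding move: a new $Y$-vertex placed incident to two consecutive boundary vertices $x_i,x_{i+1}$ and joined to both adds exactly $2$ edges and preserves both $1$-planarity and the disk condition, so raising $|Y|$ by one keeps equality. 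Hence it is enough to construct one extremal graph for the smallest feasible $|Y|$ in each $|X|$-class and then pad upward.

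The main obstacle is covering the whole range $|X|\le|Y|$ with a single scheme. The hexagonal-cell skeleton needs enough interior room to hold $2|X|-4$ disjoint hexagons (Euler's formula forces $|Y|\ge 4|X|-7$ there), so the dense range $|X|\le|Y|<4|X|-7$, where the interior is too small for that many separate hexagons, must be handled by a different, denser family in which individual faces host several crossings (the case $|X|=|Y|=3$, realized by $K_{3,3}$, is the prototype). The delicate points throughout are the simultaneous guarantees that the count is \emph{exactly} $3|X|+2|Y|-6$, that the crossing pairs stay edge-disjoint, and that no crossing edge is pushed outside the disk; once a family is fixed these reduce to a finite local verification at each gadget, after which comparison with Theorem \ref{tight upper bound} certifies that the bound is attained and therefore tight.
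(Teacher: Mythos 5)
Your edge-count bookkeeping is correct (a skeleton with $h$ hexagonal cells, each carrying one crossing pair, ends with $|E|=|X|+2|Y|+h-2$ edges, so $h=2|X|-4$ hits $3|X|+2|Y|-6$), and your padding move --- a new $Y$-vertex of degree $2$ joined to two $X$-vertices, which raises both $|E|$ and $2|V|+|X|-6$ by exactly $2$ --- is precisely the device the paper uses to sweep $|Y|$ upward. The gap is the one you flag yourself and then leave open: your hexagon gadget needs $|Y|\ge 4|X|-7$ to accommodate $2|X|-4$ distinct hexagonal inner faces, and the range $|X|\le|Y|<4|X|-7$ (nonempty for every $|X|\ge 3$) is deferred to ``a different, denser family'' that is never constructed. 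Since padding only increases $|Y|$, nothing in the proposal produces an extremal graph for, say, $|X|=|Y|=5$. In addition, even in the sparse range the skeleton is asserted rather than exhibited: one still has to display a quadrangulation of the disk with outer cycle $x_1w_1x_2w_2\cdots x_{|X|}w_{|X|}$ having exactly $2|X|-4$ hexagonal inner faces, and then check edge-disjointness of the crossing pairs on that concrete object.

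The denser gadget you are missing is the paper's configuration $B_3$. The paper takes a maximal outerplanar graph $H$ on the $|X|$ vertices of $X$ (so $|X|-2$ triangular inner faces), inserts into each triangle three new $Y$-vertices each joined to all three corners --- a $1$-planar drawing of $K_{3,3}$ inside the triangle contributing $9$ edges on $3$ new vertices, with three crossings and each edge crossed at most once --- and then deletes the edges of $H$. This attains $|E|=9(|X|-2)=3|X|+2|Y|-6$ already at $|Y|=3(|X|-2)$, after which the same degree-$2$ padding covers all larger $|Y|$. Your instinct that the dense range is delicate is sound --- the paper's own construction also starts only at $|Y|=3|X|-6$ and is silent on $|X|\le|Y|<3|X|-6$ when $|X|\ge 4$ --- but a proof of the theorem as stated must produce a graph for every admissible pair $(|X|,|Y|)$, and the proposal as written does not.
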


%
%


Observe that the upper bound in Theorem \ref{tight upper bound} is $|E(G)|\leq 2|V(G)|+|X|-6=2|Y|+3|X|-6$, 
which provides a better bound for $|E(G)|$ than  Problem \ref{oxdrawing}.
On the other hand, the upper bound in Theorem \ref{tight upper bound} is about the size of a bipartite graph with a restricted drawing. 
Indeed, there are have been many analogous results, for example,
for graphs with additional constraints on vertex positions in the drawing,
Didimo \cite{WD2013} showed that a 1-planar graph has at most $2.5n-4$ edges if it has a $1$-planar drawing such that all vertices lie on a circumference,
and a bipartite graph has at most $1.5n-2$ edges if it has a $1$-planar drawing such that its vertices lie on two distinct horizontal layers 
(for definitions, see \cite{WD2013}  for details).
For more related results on bipartite graphs, one can refer to Table $1$ in \cite{PA2018}.

\section{ Proof of the Main Theorems}\label{proof}

We now present the proofs of our main results. 

\begin{proof}[Proof of Theorem \ref{tight upper bound}]
Let $D$ be a 1-disk $\mathcal{O}_{X}$ drawing of  $G$ with $|X|$ vertices lie on the boundary of $\mathcal{O}$ 
and all vertices of $Y$ and all edges of $G$ lie in the interior of $\mathcal{O}$. 
Note that $D$ has an unbounded face $f$ which contains all vertices of $X$.
Now, we perform a spherical mapping $\pi$: $D\rightarrow D'$,
such that all vertices in $X$ of $D'$ lie on the boundary of $\mathcal{O}$ in the same way as in $D$,
and all vertices of $Y$ and all edges of $D'$ lie in the exterior of $\mathcal{O}$. 
Clearly,  $D'$ is a 1-planar drawing of $G$ and  there is a bounded face $f'$ in $D'$ which corresponding to the unbounded face $f$ in $D$.
Next, we place $D$ into the face $f'$ of $D'$ and then identify the boundaries of $f$ and $f'$ to obtain a new graph $G^{*}$.
Clearly, $G^{*}$ is a bipartite graph which has a $1$-planar drawing with $|V(G^{*})|=|X|+2|Y|$ and $|E(G^{*})|=2|E(G)|$.
Then, by Theorem \ref{tightbi}, we have
\begin{center}
$|E(G^{*})|\leq 2 |V(G^{*})|+4|X|-12=2(|X|+2|Y|)+4|X|-12=6|X|+4|Y|-12$, 
\end{center}
implying that,
\begin{center}
$2|E(G)|\leq 6|X|+4|Y|-12$ 
\end{center}
that is,
\begin{align}
|E(G)| 
\nonumber
&\leq 3|X|+2|Y|-6  \\
\nonumber
&= 2(|X|+|Y|)+|X|-6  \\
\nonumber
&=2|V(G)|+|X|-6.
\end{align}
Hence, the theorem holds.
\end{proof}

{\it Outerplanar} graphs are planar graphs that have an embedding in the plane such that all vertices belong to the boundary of its face (the unbounded face). 
An outerplanar graph $G$ is {\it maximal} if $G+uv$ is not outerplanar for any two nonadjacent vertices $u$ and $v$ of $G$. 
Note that any outerplanar graph can be extended to a maximal outerplanar graph with exactly $2n-3$ edges by triangulating the faces other than the outer face. 
We now proceed with our construction of extremal graphs to prove the bound in Theorem \ref{tight upper bound} is tight.


\begin{figure}[h!]
\centering
\begin{tikzpicture}[scale=0.7]
	\begin{pgfonlayer}{nodelayer}
		\node [style=blacknode] (0) at (0, 3) {};
		\node [style=none] (1) at (0, -3) {};
		\node [style=none] (2) at (-3, 0) {};
		\node [style=none] (3) at (3, 0) {};
		\node [style=blacknode] (4) at (-2.45, -1.775) {};
		\node [style=blacknode] (5) at (2.45, -1.8) {};
		\node [style=bluenode] (6) at (0, 0.5) {};
		\node [style=bluenode] (7) at (-0.5, -0.5) {};
		\node [style=bluenode] (8) at (0.5, -0.5) {};
		\node [style=none] (9) at (0, -4) {};
	\end{pgfonlayer}
	\begin{pgfonlayer}{edgelayer}
		\draw [style={balck_dash}, bend left=45] (0) to (3.center);
		\draw [style={balck_dash}, bend right=45] (0) to (2.center);
		\draw [style={balck_dash}, bend right=45] (2.center) to (1.center);
		\draw [style={balck_dash}, bend right=45] (1.center) to (3.center);
		\draw [style=blueedge] (6) to (0);
		\draw [style=blueedge] (4) to (6);
		\draw [style=blueedge] (6) to (5);
		\draw [style=blueedge] (0) to (7);
		\draw [style=blueedge] (4) to (7);
		\draw [style=blueedge] (7) to (5);
		\draw [style=blueedge] (8) to (5);
		\draw [style=blueedge] (8) to (0);
		\draw [style=blueedge] (8) to (4);
	\end{pgfonlayer}
\end{tikzpicture}
 \caption{The configuration $B_{3}$.}
 \label{B3}
\end{figure}
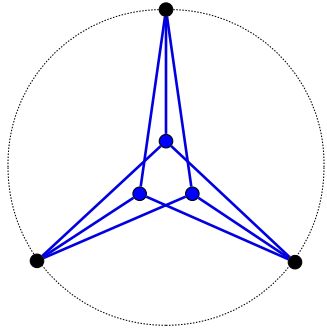

\begin{proof}[Proof of Theorem \ref{extremalgraph}] 
We shall use  maximal outerplanars to construct a class of bipartite graphs with partite sets of sizes $|X|$ and $|Y|$ 
such that $|E(G)|=2|V(G)|+|X|-6$ and $G$ has a 1-disk $\mathcal{O}_{X}$ drawing.
First assume that $|Y|=3(|X|-2)$.


Let $H$ be a maximal outerplanar graph with $|X|$ vertices. We color the vertices and edges in $H$ by black.
By Euler's formula, there are $|X|-2$ triangular faces  in $H$.
Let $H^{*}$ be the graph obtained from $H$ by inserting a configuration $B_{3}$ depicted in Fig. \ref{B3} into each of its triangular faces.
And we color the newly added vertices and edges of $H^{*}$ by blue, as shown  in Fig. \ref{extremal graph} (a).
Let $G$ be the graph obtained from $H^{*}$ by removing all black edges of $H$.
See Fig. \ref{extremal graph} (b) for example.
Note that blue vertices are independent and each blue vertex is adjacent to three black vertices.
Then there are three blue vertices and nine blue edges are added into each triangular faces of $H$.
Thus there are $|X|$ black vertices and $3(|X|-2)$ blue vertices in $G$,
i.e., $G$ is a bipartite graph with partite sets of sizes $|X|$ and $|Y|=3(|X|-2)$.  
From our construction, it is not difficult to see that $G$ has a 1-disk $\mathcal{O}_{X}$ drawing $D$.
And it can be checked that  $|V(G)|=|X|+|Y|=|X|+3(|X|-2)=4|X|-6$
and $|E(G)|=9(|X|-2)=18|X|-2=3|X|+2\times (3|X|-6)-6=3|X|+2|Y|-6=2|V(G)|+|X|-6$, as desired.

Now further suppose that $|Y|=3(|X|-2)+t$ for some $t\geq1$. 

In this case, we first let $G$ be the bipartite $1$-planar graph with partite sets of sizes $|X|$ and $3(|X|-2)$ that is obtained as above. 
Since $G$ has a 1-disk $\mathcal{O}_{X}$ drawing $D$.
We add $t$ vertices into an arbitrary  region of $D$ that is incident to two black vertices 
and join each of the $t$ vertices (without any edge crossing) to the two black vertices.
In such a way we obtain a new bipartite $1$-planar graph $G'$ with partite sets of sizes $|X|$ and $|Y|=3(|X|-2)+t$. 
Clearly $G'$ has a $1$-disk $\mathcal{O}_{X}$ drawing. 
And we can verify that $|V(G')|=|X|+|Y|=|X|+3(|X|-2)+t=4|X|-6+t$  
and $|E(G')|=9(|X|-2)+2t=9|X|-18+2t=3|X|+2(3|X|-6+t)-6=3|X|+2|Y|-6=2|V(G')|+|X|-6$. 
Consequently, the proof is completed.
\end{proof}

\begin{figure}[h!]
\centering
\begin{tikzpicture}[scale=0.7]
	\begin{pgfonlayer}{nodelayer}
		\node [style=blacknode] (4) at (5, 3) {};
		\node [style=none] (5) at (5, -3) {};
		\node [style={blacknode_v1}] (6) at (2, 0) {};
		\node [style={blacknode_v1}] (7) at (8, 0) {};
		\node [style=none] (13) at (3.25, 2.475) {};
		\node [style=none] (14) at (6.75, 2.475) {};
		\node [style=blacknode] (15) at (2.75, -2) {};
		\node [style=blacknode] (16) at (7.25, -2) {};
		\node [style={blacknode_v1}] (17) at (2.175, 1) {};
		\node [style={blacknode_v1}] (18) at (2.175, -1) {};
		\node [style={blacknode_v1}] (19) at (7.75, 1.25) {};
		\node [style={blacknode_v1}] (20) at (7.825, -1.025) {};
		\node [style=bluenode] (21) at (5, 0.5) {};
		\node [style=bluenode] (22) at (4.5, -0.75) {};
		\node [style=bluenode] (23) at (5.5, -0.75) {};
		\node [style=none] (24) at (2.75, 1.25) {};
		\node [style=none] (25) at (4, 2.25) {};
		\node [style=none] (26) at (2.5, 0.25) {};
		\node [style=none] (27) at (3.75, 1.75) {};
		\node [style=none] (28) at (2.75, -0.75) {};
		\node [style=none] (29) at (3.75, 1) {};
		\node [style=none] (30) at (7, 1.5) {};
		\node [style=none] (31) at (5.75, 2.5) {};
		\node [style=none] (32) at (7.25, 0.5) {};
		\node [style=none] (33) at (6, 2) {};
		\node [style=none] (34) at (7.25, -0.5) {};
		\node [style=none] (35) at (6.25, 1.25) {};
		\node [style=none] (36) at (5, 3.5) {};
		\node [style=none] (37) at (7.25, 3) {};
		\node [style=none] (38) at (2.75, 3) {};
		\node [style=none] (39) at (2.5, -2.5) {};
		\node [style=none] (40) at (7.75, -2.5) {};
		\node [style=none] (42) at (5, -4) {$(b)$};
		\node [style=blacknode] (43) at (-5, 3) {};
		\node [style=none] (44) at (-5, -3) {};
		\node [style={blacknode_v1}] (45) at (-8, 0) {};
		\node [style={blacknode_v1}] (46) at (-2, 0) {};
		\node [style=none] (47) at (-6.75, 2.475) {};
		\node [style=none] (48) at (-3.25, 2.475) {};
		\node [style=blacknode] (49) at (-7.25, -2) {};
		\node [style=blacknode] (50) at (-2.75, -2) {};
		\node [style={blacknode_v1}] (51) at (-7.825, 1) {};
		\node [style={blacknode_v1}] (52) at (-7.825, -1) {};
		\node [style={blacknode_v1}] (53) at (-2.25, 1.25) {};
		\node [style={blacknode_v1}] (54) at (-2.175, -1.025) {};
		\node [style=bluenode] (55) at (-5, 0.5) {};
		\node [style=bluenode] (56) at (-5.5, -0.75) {};
		\node [style=bluenode] (57) at (-4.5, -0.75) {};
		\node [style=none] (58) at (-7.25, 1.25) {};
		\node [style=none] (59) at (-6, 2.25) {};
		\node [style=none] (60) at (-7.5, 0.25) {};
		\node [style=none] (61) at (-6.25, 1.75) {};
		\node [style=none] (62) at (-7.25, -0.75) {};
		\node [style=none] (63) at (-6.25, 1) {};
		\node [style=none] (64) at (-3, 1.5) {};
		\node [style=none] (65) at (-4.25, 2.5) {};
		\node [style=none] (66) at (-2.75, 0.5) {};
		\node [style=none] (67) at (-4, 2) {};
		\node [style=none] (68) at (-2.75, -0.5) {};
		\node [style=none] (69) at (-3.75, 1.25) {};
		\node [style=none] (70) at (-5, 3.5) {};
		\node [style=none] (71) at (-2.75, 3) {};
		\node [style=none] (72) at (-7.25, 3) {};
		\node [style=none] (73) at (-7.5, -2.5) {};
		\node [style=none] (74) at (-2.25, -2.5) {};
		\node [style=none] (75) at (-5, -4) {$(a)$};
	\end{pgfonlayer}
	\begin{pgfonlayer}{edgelayer}
		\draw [style={balck_dash}, bend left=45] (6) to (4);
		\draw [style={balck_dash}, bend left=45] (4) to (7);
		\draw [style={balck_dash}, bend right=45] (6) to (5.center);
		\draw [style={balck_dash}, bend right=45] (5.center) to (7);
		\draw [style={balck_dash}] (4) to (15);
		\draw [style={balck_dash}] (4) to (16);
		\draw [style=blueedge] (21) to (4);
		\draw [style=blueedge] (21) to (15);
		\draw [style=blueedge] (21) to (16);
		\draw [style=blueedge] (22) to (15);
		\draw [style=blueedge] (22) to (4);
		\draw [style=blueedge] (22) to (16);
		\draw [style=blueedge] (4) to (23);
		\draw [style=blueedge] (23) to (15);
		\draw [style=blueedge] (23) to (16);
		\draw [style={balck_dash}] (24.center) to (25.center);
		\draw [style={balck_dash}] (26.center) to (27.center);
		\draw [style={balck_dash}] (28.center) to (29.center);
		\draw [style={balck_dash}] (30.center) to (31.center);
		\draw [style={balck_dash}] (32.center) to (33.center);
		\draw [style={balck_dash}] (34.center) to (35.center);
		\draw [style=blackedge, bend left=45] (45) to (43);
		\draw [style=blackedge, bend left=45] (43) to (46);
		\draw [style=blackedge, bend right=45] (45) to (44.center);
		\draw [style=blackedge, bend right=45] (44.center) to (46);
		\draw [style=blackedge] (43) to (49);
		\draw [style=blackedge] (43) to (50);
		\draw [style=blueedge] (55) to (43);
		\draw [style=blueedge] (55) to (49);
		\draw [style=blueedge] (55) to (50);
		\draw [style=blueedge] (56) to (49);
		\draw [style=blueedge] (56) to (43);
		\draw [style=blueedge] (56) to (50);
		\draw [style=blueedge] (43) to (57);
		\draw [style=blueedge] (57) to (49);
		\draw [style=blueedge] (57) to (50);
		\draw [style={balck_dash}] (58.center) to (59.center);
		\draw [style={balck_dash}] (60.center) to (61.center);
		\draw [style={balck_dash}] (62.center) to (63.center);
		\draw [style={balck_dash}] (64.center) to (65.center);
		\draw [style={balck_dash}] (66.center) to (67.center);
		\draw [style={balck_dash}] (68.center) to (69.center);
	\end{pgfonlayer}
\end{tikzpicture}
 \caption{$(a)$ The graph $H^{*}$. $(b)$ The bipartite graph $G$ (dashed lines do not exist in $G$).}
 \label{extremal graph}

\end{figure}
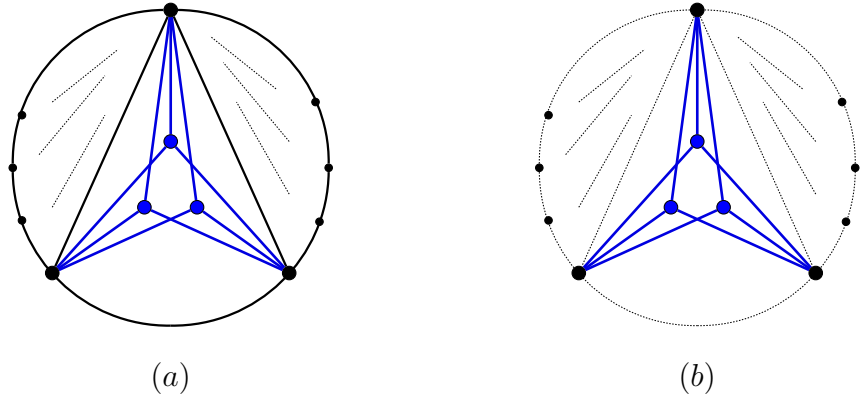

%


\bigskip\noindent \textbf{Data Availability}  There was no data used in the preparation of this manuscript.

\section*{Declarations}
\addcontentsline{toc}{section}{Declarations}
\bigskip\noindent \textbf{Conflict of Interest} The authors declare that they have no known competing financial
interests or personal relationships that could have appeared to influence
the work reported in this paper.

\end{document}